\newtheorem{theorem}{Theorem}[section]
\newtheorem{lemma}[theorem]{Lemma}
\newtheorem{cor}[theorem]{Corollary}
\theoremstyle{definition}
\newcommand{\p}{{\mbox{$[p]$}}}
\newcommand{\scp}{{\mbox{$\scriptstyle [p]$}}}
\DeclareMathOperator{\asoc}{Asoc}
\DeclareMathOperator{\U}{U}
\DeclareMathOperator{\redu}{u}
\DeclareMathOperator{\cl}{Cl}
\DeclareMathOperator{\id}{id}
\title{Faithful irreducible representations of modular Lie algebras}
\author{Donald W. Barnes}
\address{1 Little Wonga Rd.\\Cremorne NSW 2090\\Australia\\}
\email{D.Barnes@maths.usyd.edu.au}
\thanks{This work was done while the author was an Honorary Associate of the School of Mathematics and Statistics, University of Sydney}
\subjclass[2010]{Primary 17B50}
\keywords{modular Lie algebras, faithful representations}
\begin{document}

\begin{abstract} Let $L$ be a finite-dimensional Lie algebra over a field of characteristic $p\ne 0$.  By a theorem of Jacobson, $L$ has a finite-dimensional faithful completely reducible module.  We show that if $F$ is not algebraically closed, then $L$ has an irreducible such module.  We also give a necessary and sufficient condition for a finite-dimensional Lie algebra over an algebraically closed field of non-zero characteristic to have a faithful irreducible module.
\end{abstract}

\maketitle

\section{Introduction}

Let $L$ be a finite-dimensional Lie algebra over the field $F$ of characteristic $p>0$. By Jacobson's Theorem, \cite[Theorem VI.2]{Jac}, \cite[Theorem 5.5.2]{SF}, $L$ has a finite-dimensional faithful completely reducible module.  We adapt Jacobson's proof to show that if $F$ is not algebraically closed, then $L$ has an irreducible such module.  We may replace $L$ with a minimal $p$-envelope.  Thus without loss of generality, we may assume that $L$ is restricted.  Further,  by \cite[Corollary 2.3]{A-I dim}, we may assume that the $p$-operation $\p$ vanishes on $\asoc(L)$, the abelian socle of $L$.  We note that, by the Artin-Schreier Theorem\footnote{I wish to thank J. M. Bois for drawing my attention to this theorem.}, (see \cite[Theorem 11.14]{JacB} or \cite[Theorem 3.1]{con},) our assumption that $F$ is not algebraically closed implies that the algebraic closure $\bar{F}$ has infinite dimension over $F$.

Jacobson takes an appropriately selected character $c$ and uses the $c$-reduced enveloping algebra $\redu(L,c)$ to obtain for a given element $x \in L$, an irreducible module on which $x$ acts non-trivially.  If $x$ is in some minimal ideal $K$ of $L$, then $K$ is not in the kernel of the representation.  For each minimal ideal $K$, we can clearly construct an irreducible module $V_K$ on which $K$ acts non-trivially.  We shall choose modules $V_K$ such that their tensor product has a composition factor on which all minimal ideals act non-trivially.  It is in the making of this choice for the abelian minimal ideals that we use that the dimension of $\bar{F}$ over $F$ is infinite.

\section{diagonals}\label{diags}
Let $A_1, \dots, A_r$ be abelian minimal ideals of $L$ which, as $L$-modules,  are isomorphic with isomorphisms $\phi_i: A_1 \to A_i$, $\phi_1= \id$.  For a given $A_1$, we choose the $A_i$ with $r$ as large as possible subject to their sum being direct.  With the $A_i$ so chosen, we say that $A_1$ has multiplicity $r$ in $\asoc(L)$. For any $\lambda_1, \dots, \lambda_r $, not all zero, the set $\{\sum_{i=1}^r \lambda_i \phi_i(a) \mid a\in A_1\}$ is also a minimal ideal and every minimal ideal isomorphic to $A_1$ has this form.  Those with more that one of the $\lambda_i$ non-zero are called \textit{diagonals.}  We shall denote the ideal given by $\lambda = (\lambda_1, \dots, \lambda_r)$ by $A_\lambda$.

Under our assumptions, an abelian minimal ideal $A$ acts non-trivially on an irreducible module $V$ with character $c$ if and only if $c(A) \ne 0$.  If for the above $A_i$, we put $c_i = c| A_i$, then for $a_\lambda = \sum \lambda_i \phi_i(a) \in  A_\lambda$, we have $c(a_\lambda)= \sum \lambda_i c_i(\phi_i(a))$.  Thus $c$ vanishes on some $A_\lambda$ if and only if the $c_i\phi_i$ are linearly dependent.

We shall use generalised characters, linear maps $c:L \to \bar{F}$, where $\bar{F}$ denotes the algebraic closure of $F$.  If $c$ is a character of a module $V$ then every conjugate of $c$ is also a character.  The set of all characters of composition factors of $\bar{V}=\bar{F} \otimes V$ is called the \textit{character cluster} $\cl(V)$.  (See \cite{cluster}.)  If the restriction $\cl(V)|A$ of $\cl(V)$ to the abelian minimal ideal $A$ does not contain $0$, then $A$ acts non-trivially on every composition factor of $V$.  By \cite[Theorem 3.4]{cluster}, if $V$ is irreducible, then $\cl(V)$ is simple, that is, the characters in $\cl(V)$ are all conjugate.   Thus if $c$ is a generalised character of the irreducible module $V$ and $c(A) \ne 0$, then $A$ acts non-trivially on $V$.

If $C$ is a simple cluster, then we can form the $C$-reduced enveloping algebra $\redu(L,C)$.  For any $x \in L$, the element $x^p - x^\scp$ of the universal enveloping algebra is central and on an $\bar{F} \otimes L$-module with character $c$, acts as multiplication by $c(x)^p$.  Let $m_x(t)\in F[t]$ be the minimal polynomial of $c(x)^p$.  The $C$-reduced enveloping algebra $\redu(L,C)$ is the quotient of $\U(L)$ by the ideal generated by $\{m_x(x^p-x^\scp) \mid x \in L\}$ for some $c \in C$.   (Since all $c \in C$ are conjugate, they all give the same polynomial $m_x(t)$.)   The $C$-reduced enveloping algebra is finite-dimensional.  Any $\redu(L,C)$-module is an $L$-module with cluster $C$.  This construction may be found in Farnsteiner \cite{Farn} or in Barnes \cite{induced}.

\begin{lemma} \label{exC} For any simple cluster $C$, there exists a finite-dimensional irreducible $L$-module with cluster $C$.
\end{lemma}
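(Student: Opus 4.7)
The plan is to let $V$ be any irreducible module for the $C$-reduced enveloping algebra $\redu(L,C)$. Since $\redu(L,C)$ is finite-dimensional, if it is non-zero then any composition factor of its regular representation furnishes such a $V$, and pulling back along $\U(L)\twoheadrightarrow\redu(L,C)$ turns $V$ into a finite-dimensional irreducible $L$-module. By the property of the construction quoted from \cite{Farn,induced}, $V$ carries a cluster contained in $C$; being irreducible, $\cl(V)$ is itself simple, i.e.\ a single Galois conjugacy class, so the inclusion $\cl(V)\subseteq C$ of one conjugacy class into another forces $\cl(V)=C$.

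The one point that has to be checked is that $\redu(L,C)\ne 0$. For this I would extend scalars to $\bar F$. Fix $c\in C$. For every $x\in L$, the minimal polynomial $m_x(t)\in F[t]$ of $c(x)^p$ has $c(x)^p$ as a root, so in $\bar F[t]$ the linear factor $t-c(x)^p$ divides $m_x(t)$. Consequently, inside $\U(\bar F\otimes L)=\bar F\otimes\U(L)$, the element $m_x(x^p-x^\scp)$ lies in the ideal generated by $x^p-x^\scp-c(x)^p$. Hence the extension of the defining ideal of $\redu(L,C)$ is contained in the defining ideal of the classical $c$-reduced enveloping algebra $\redu(\bar F\otimes L,c)$, yielding a surjection
$$\bar F\otimes_F\redu(L,C)\twoheadrightarrow \redu(\bar F\otimes L,c).$$
Since the right-hand side is Jacobson's classical reduced enveloping algebra, which has dimension $p^{\dim_F L}$ and is in particular non-zero, $\redu(L,C)$ must be non-zero as well.

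The main obstacle, therefore, is purely the non-triviality of $\redu(L,C)$; everything else is bookkeeping with the properties of the $C$-reduced enveloping algebra and of simple clusters that have already been recalled. Once the surjection above is in hand, producing $V$ and identifying its cluster as $C$ is immediate.
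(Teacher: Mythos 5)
Your proposal is correct and follows essentially the same route as the paper: take a composition factor of the regular module of the finite-dimensional algebra $\redu(L,C)$ and view it as an irreducible $L$-module with cluster $C$ via the quoted properties of the construction. The extra step you supply --- checking $\redu(L,C)\ne 0$ by base change to $\bar F$ and surjecting onto the classical reduced enveloping algebra $\redu(\bar F\otimes L,c)$ of dimension $p^{\dim L}$ --- is a sound verification of a point the paper leaves implicit in its citation of Farnsteiner and Barnes.
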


\begin{proof} As $\redu(L,C)$ is a $\redu(L,C)$-module, any $L$-module composition factor of $\redu(L,C)$ is an irreducible module with cluster $C$.
\end{proof}

\begin{lemma} \label{abelian} There exists an irreducible $L$-module $V_0$ on which every abelian minimal ideal acts non-trivially. 
\end{lemma}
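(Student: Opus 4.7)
The plan is to construct a single generalised character $c \colon L \to \bar F$ that does not vanish on any abelian minimal ideal of $L$, and then apply Lemma~\ref{exC} to the Galois orbit of $c$. The description of abelian minimal ideals via diagonals in Section~\ref{diags} reduces this to an $F$-linear independence problem on restrictions of $c$, and the infinite $F$-dimension of $\bar F$ (the Artin--Schreier consequence noted in the introduction) is exactly what supplies enough room to solve that problem.

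In detail, I would list the isomorphism classes of abelian minimal ideals as $B_1, \dots, B_s$, with multiplicities $r_1, \dots, r_s$ in $\asoc(L)$. For each $j$, fix representatives $B_j^{(1)}, \dots, B_j^{(r_j)}$ in direct sum, with isomorphisms $\phi_j^{(i)} \colon B_j \to B_j^{(i)}$. By Section~\ref{diags}, every abelian minimal ideal is a diagonal $B_{j,\lambda} = \{\sum_i \lambda_i \phi_j^{(i)}(b) : b \in B_j\}$ for some $j$ and some $0 \ne \lambda \in F^{r_j}$, and the vanishing $c(B_{j,\lambda}) = 0$ is equivalent to the $F$-relation $\sum_i \lambda_i (c \circ \phi_j^{(i)}) = 0$ in $\operatorname{Hom}_F(B_j, \bar F)$. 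Now choose a non-zero $F$-linear form $\gamma_j \colon B_j \to F$ for each $j$, and choose scalars $\alpha_j^{(i)} \in \bar F$ so that $\alpha_j^{(1)}, \dots, \alpha_j^{(r_j)}$ are $F$-linearly independent for each $j$; this is possible because $[\bar F : F] = \infty$, and all the $\alpha_j^{(i)}$ can in fact be chosen inside one finite extension of $F$ by taking powers of a single algebraic element of sufficiently large degree. Define $c$ on $\asoc(L)$ by $c(\phi_j^{(i)}(b)) = \alpha_j^{(i)} \gamma_j(b)$ and extend to an $F$-linear map $c \colon L \to \bar F$ still taking values in that finite extension. Then for each $0 \ne \lambda \in F^{r_j}$, the scalar $\sum_i \lambda_i \alpha_j^{(i)}$ is non-zero in $\bar F$, whence $c(B_{j,\lambda}) \ne 0$.

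Finally, let $C$ be the finite Galois orbit of $c$ under $\operatorname{Gal}(\bar F / F)$; this is a simple cluster. Lemma~\ref{exC} produces an irreducible finite-dimensional $L$-module $V_0$ with $\cl(V_0) = C$. Since Galois conjugation preserves non-vanishing of a linear form on an $F$-subspace, every character in $C$ is still non-zero on every abelian minimal ideal, so the criterion stated before Lemma~\ref{exC} gives that every abelian minimal ideal acts non-trivially on $V_0$. The main obstacle is the middle step: producing one character that avoids all of the (typically infinitely many) diagonals $B_{j,\lambda}$ simultaneously — and this is precisely where the Artin--Schreier consequence $[\bar F : F] = \infty$ is indispensable, since over a finite-dimensional extension the diagonals could conceivably exhaust a coordinate direction of $\operatorname{Hom}_F(B_j, \bar F)$ and force $c$ to vanish on one of them.
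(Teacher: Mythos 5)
Your proposal is correct and follows essentially the same route as the paper: you build a linear map $c$ on $\asoc(L)$ that is non-zero on every diagonal by exploiting $F$-linear independence inside $\bar F$ (the Artin--Schreier consequence), extend it to a generalised character, and invoke the cluster machinery of Lemma~\ref{exC}. Your choice $f_i=\alpha_j^{(i)}\gamma_j$ is just an explicit instance of the paper's ``$r$ linearly independent maps $A_1\to\bar F$''.
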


\begin{proof}  Let $A_1, \dots, A_r$ be abelian minimal ideals of $L$ which are isomorphic with isomorphisms $\phi_i: A_1 \to A_i$, $\phi_1 = \id$, chosen as above.  As $\dim(\bar{F}/F)$ is infinite, we can take $r$ linearly independent linear maps $f_i: A_1 \to  \bar{F}$ and by setting $g_i = f_i \circ\phi_i^{-1}: A_i \to \bar{F}$, construct a map $g:\sum_iA_i \to \bar{F}$ which is non-zero on every minimal ideal isomorphic to $A_1$.  Doing this for each isomorphism type of abelian minimal ideal, we obtain $c: \asoc(L) \to \bar{F}$ which is nonzero on every abelian minimal ideal.  This map can be extended to a generalised character and so we obtain an irreducible module $V_0$ on which every abelian minimal ideal acts non-trivially.
\end{proof}

\section{The main result}
\begin{lemma} \label{prod} Let $K$ be an ideal of $L$ and let $V,W$ be irreducible $L$-modules.  Suppose that $K$ acts non-trivially on $V$ and trivially on $W$.  Then $K$ acts non-trivially on every composition factor of $V\otimes W$.
\end{lemma}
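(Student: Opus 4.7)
My plan is to pass to the restriction of $V\otimes W$ to $K$ and reduce the lemma to a structural property of $V|_K$ alone.

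Since $K$ annihilates $W$, the action of any $k\in K$ on $V\otimes W$ is given by $k\cdot(v\otimes w)=kv\otimes w$, so a basis of $W$ yields a $K$-module isomorphism $V\otimes W\cong V^{\oplus\dim W}$. Any $L$-composition factor $U$ of $V\otimes W$ is in particular a $K$-subquotient of $V^{\oplus\dim W}$, so its $K$-composition factors appear among those of $V|_K$. Thus if $K$ were to act trivially on $U$, the trivial $K$-module would occur as a $K$-composition factor of $V|_K$, and it suffices to rule this out.

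The fixed-point space $V^K=\{v\in V:Kv=0\}$ is an $L$-submodule of $V$ (because $K$ is an ideal), proper in $V$ since $K$ acts non-trivially, so $V^K=0$ by $L$-irreducibility. If, in addition, $V|_K$ is semisimple as a $K$-module, then absence of a trivial $K$-summand immediately precludes a trivial $K$-composition factor, finishing the proof.

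The main step is therefore a Clifford-style semisimplicity: $V|_K$ is $K$-semisimple. My approach is to pass to the finite-dimensional associative algebra $A$ equal to the image of $U(K)$ in $\operatorname{End}_F V$. Because $K$ is an ideal of $L$, each $\ell\in L$ normalises $A$ (acting via the representation), so the inner derivation $[\ell,-]$ of $\operatorname{End}_F V$ restricts to a derivation $D_\ell$ of $A$. The derivations $D_\ell$ preserve the Jacobson radical $J(A)$, so $J(A)\cdot V$ is an $L$-submodule of $V$; nilpotence of $J(A)$ forbids $J(A)\cdot V=V$, and $L$-irreducibility then forces $J(A)\cdot V=0$, whence faithfulness of $A$ on $V$ gives $J(A)=0$. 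Hence $A$ is semisimple, making $V|_K$ $K$-semisimple, and the argument closes. The most delicate point here in characteristic $p$ is the $L$-stability of $J(A)$, and this is where I expect the main technical work to lie.
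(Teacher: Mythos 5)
Your reductions at the start are fine: since $K$ annihilates $W$, $(V\otimes W)|_K\cong (V|_K)^{\oplus\dim W}$, any $L$-composition factor is a $K$-subquotient of this, and $V^K=0$ by irreducibility. But the step you yourself flag as the main one is not merely delicate in characteristic $p$ --- it is false, in both of the forms you rely on. First, the Jacobson radical of a finite-dimensional associative algebra need not be stable under derivations in characteristic $p$: in $A=F[t]/(t^p)$ the radical is $(\bar t\,)$, and the derivation $d/dt$ sends $\bar t$ to $1$. Second, and decisively, the Clifford-style conclusion itself fails: take $L=Fx\oplus Fy$ with $[x,y]=y$, $K=Fy$, and let $V$ have basis $v_0,\dots,v_{p-1}$ with $x v_i=i v_i$ and $y v_i=v_{i+1}$ (indices mod $p$). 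This $V$ is an irreducible $L$-module, $K$ acts non-trivially, but $y$ acts as the cyclic shift, whose minimal polynomial is $t^p-1=(t-1)^p$; hence $V|_K\cong F[y]/\bigl((y-1)^p\bigr)$ is indecomposable and not semisimple, and the image of $\U(K)$ in $\operatorname{End}_F(V)$ is a local algebra whose radical (generated by $y-1$) is indeed not stable under $[x,-]$, since $[x,y]=y$ acts invertibly. So the semisimplicity of $V|_K$ cannot be proved because it does not hold, and your argument does not close.

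What your reduction actually requires is strictly weaker: that the trivial module does not occur as a $K$-composition factor of $V|_K$ when $V$ is $L$-irreducible and $K V\neq 0$ (in the example above this is the case --- every $K$-composition factor has $y$ acting as $1$). That statement is the real content, and it needs an argument that does not pass through semisimplicity of the restriction; equivalently one must show that every nonzero $K$-submodule $X\subseteq V$ satisfies $KX=X$, or argue directly on $L$-composition factors of $V\otimes W$ as the paper does. For comparison, the paper's own proof is a two-line Clifford-type assertion (every composition factor of $V\otimes W$ is, as $K$-module, a direct sum of copies of $V$), stated without the machinery you tried to build; your instinct about what is at stake matches it, but the route through derivation-stability of the radical cannot be repaired in characteristic $p$, so as it stands your proposal has a genuine gap at its central step.
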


\begin{proof} Regarded as $K$-module, $V \otimes W$ is a direct sum of copies of $V$.  Thus any composition factor of $V \otimes W$ is as $K$-module, a direct sum of copies of $V$.
\end{proof}

\begin{theorem} \label{main} Let $L$ be a finite-dimensional Lie algebra over a field $F$ of characteristic $p \ne 0$. Suppose that $F$ is not algebraically closed.  Then there exists a finite-dimensional faithful irreducible $L$-module.
\end{theorem}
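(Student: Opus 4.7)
The plan is to first reduce faithfulness to a statement about minimal ideals: an $L$-module $V$ is faithful iff its kernel (which is an ideal of $L$) is zero, iff no minimal ideal of $L$ is contained in the kernel, iff every minimal ideal of $L$ acts non-trivially on $V$. So it suffices to construct a single finite-dimensional irreducible $L$-module on which every minimal ideal acts non-trivially.

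Lemma \ref{abelian} already supplies an irreducible module $W_{0}$ on which every abelian minimal ideal acts non-trivially. The (finitely many) non-abelian minimal ideals $K_{1},\dots,K_{m}$ will be handled by induction on $i$, producing irreducible modules $W_{i}$ on which every abelian minimal ideal together with $K_{1},\dots,K_{i}$ act non-trivially. At the inductive step, if $K_{i}$ already acts non-trivially on $W_{i-1}$ set $W_{i}:=W_{i-1}$; otherwise, construct an auxiliary irreducible $L$-module $V_{i}$ on which $K_{i}$ acts non-trivially while every minimal ideal acting non-trivially on $W_{i-1}$ acts trivially on $V_{i}$. Every relevant minimal ideal then satisfies the hypothesis of Lemma \ref{prod} with respect to $V_{i}\otimes W_{i-1}$ (non-trivial on one factor, trivial on the other), so applying the lemma once per ideal shows that every composition factor of $V_{i}\otimes W_{i-1}$ has every relevant minimal ideal acting non-trivially; let $W_{i}$ be any such composition factor. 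After $m$ steps, $W_{m}$ is the required faithful irreducible $L$-module.

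The main obstacle is producing the auxiliary $V_{i}$. My approach is to pass to the quotient $L/M$, where $M:=\asoc(L)+\sum_{j<i}K_{j}$, and apply Jacobson's Theorem to $L/M$ to obtain a faithful completely reducible module, at least one of whose irreducible components must have (the image of) $K_{i}$ acting non-trivially; pulling this back through $L\to L/M$ then yields $V_{i}$. The crucial verification is that $K_{i}$ is not contained in $M$. Since $K_{i}$ is a minimal ideal of $L$, $K_{i}\cap M$ equals $0$ or $K_{i}$; the latter would give $K_{i}=[K_{i},K_{i}]\subseteq[M,M]=\sum_{j<i}K_{j}$ (using that $[\asoc(L),K_{j}]=0$ and that distinct non-abelian minimal ideals of $L$ commute), but if $K_{i}\subseteq\sum_{j<i}K_{j}$ then for each $j<i$ the projection of $K_{i}$ onto $K_{j}$ would be centralised by $K_{j}$ — hence zero, since the non-abelian minimal ideal $K_{j}$ has trivial centre — so $K_{i}$ itself would vanish, contradicting minimality.
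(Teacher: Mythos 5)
Your proposal is correct and follows the same overall strategy as the paper: reduce faithfulness to the statement that every minimal ideal acts non-trivially, start from the module of Lemma \ref{abelian}, and absorb the non-abelian minimal ideals one at a time via tensor products and Lemma \ref{prod}. Where you genuinely differ is in producing the auxiliary module $V_i$: you pass to $L/M$ with $M=\asoc(L)+\sum_{j<i}K_j$, invoke Jacobson's theorem for the quotient, and pull back, which obliges you to check $K_i\not\subseteq M$. The paper short-circuits this step: $K_i$ itself, with the adjoint action, is already an irreducible $L$-module on which $K_i$ acts non-trivially (it is non-abelian) and every other minimal ideal $N$ acts trivially (since $[N,K_i]\subseteq N\cap K_i=0$), so one simply tensors $V_{i-1}$ with $K_i$ --- no quotient, no second appeal to Jacobson, no containment check. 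Your verification that $K_i\not\subseteq M$ is essentially sound but has one wrinkle: the projection argument presupposes that the sum $\sum_{j<i}K_j$ is direct, which itself requires proof (sums of distinct minimal ideals need not be direct in general --- the diagonals of Section \ref{diags} show this in the abelian case --- though directness does hold for non-abelian minimal ideals, by an argument of the same kind you are in the middle of making). A cleaner patch avoiding this: if $K_i\subseteq M$, then since $K_i$ commutes with $\asoc(L)$ and with each $K_j$, $j<i$, it centralises $M$ and hence centralises itself, making $K_i$ abelian, a contradiction. Also, your construction only guarantees that the ideals contained in $M$ act trivially on $V_i$, not literally ``every minimal ideal acting non-trivially on $W_{i-1}$''; this is harmless, since only the abelian ideals and $K_1,\dots,K_i$ matter at step $i$. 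In sum, your route works but is heavier; the paper's use of the adjoint module $K_i$ makes the inductive step essentially trivial.
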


\begin{proof}  Let $K_1, \dots, K_s$ be the non-abelian minimal ideals of $L$.  Then $K_i$ is an irreducible $L$-module on which $K_i$ acts non-trivially and on which every other minimal ideal acts trivially.  We start with $V_0$ given by Lemma \ref{abelian} and construct inductively irreducible modules $V_i$ on which the abelian minimal ideals and the $K_j$ for $j \le i$ all act non-trivially.

We suppose that we have constructed $V_{i-1}$.  If $K_i$ acts non-trivially on $V_{i-1}$, we set $V_i = V_{i-1}$.  If not, then we take for $V_i$ any composition factor of $V_{i-1} \otimes K_i$.  By Lemma \ref{prod}, this satisfies the requirements.

Put $V = V_s$.  Then $V$ is a finite-dimensional irreducible $L$-module on which every minimal ideal of $L$ acts non-trivially.  Since the kernel of the representation cannot contain any minimal ideal, the representation is faithful.
\end{proof}

When the field $F$ is algebraically closed field, the above argument leads to a necessary and sufficient condition for the existence of a faithful irreducible module. 

\begin{theorem} \label{aclosed} Let $L$ be a finite-dimensional Lie algebra over the algebraically closed field $F$ of characteristic $p \ne  0$.  Then $L$ has a faithful irreducible module if and only if, for each abelian minimal ideal $A$ of $L$, the $L$-module isomorphism type of $A$ has multiplicity $r \le \dim(A)$.
\end{theorem}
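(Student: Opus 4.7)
The plan is to extract from the machinery already set up the precise dimension constraint that takes the place of the infinite-dimensional input $\dim_F(\bar F)=\infty$ used in Theorem \ref{main}. Both directions come down to counting linearly independent $F$-valued forms on a single representative $A_1$ of an isomorphism class of abelian minimal ideals, using the diagonal calculation of Section \ref{diags}.

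For necessity, let $V$ be a faithful irreducible $L$-module. Since $F$ is algebraically closed, $\bar V = V$, so $\cl(V) = \{c\}$ is a singleton consisting of the character $c$ of $V$. Fix an isomorphism type of abelian minimal ideal with representatives $A_1,\ldots,A_r$ and isomorphisms $\phi_i : A_1 \to A_i$ as in Section \ref{diags}. Every diagonal $A_\lambda$ is itself a minimal ideal of $L$, so by faithfulness $c(A_\lambda) \ne 0$ for every non-zero $\lambda \in F^r$; writing $c_i = c|_{A_i}$, the computation of Section \ref{diags} shows that this is equivalent to the linear forms $c_i \phi_i \in \mathrm{Hom}_F(A_1, F)$ being linearly independent, and hence $r \le \dim_F \mathrm{Hom}_F(A_1,F) = \dim(A_1) = \dim(A)$.

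For sufficiency, I would reproduce the proof of Lemma \ref{abelian} but working with $F$-valued linear maps throughout. For each isomorphism type, the hypothesis $r \le \dim(A_1)$ allows the choice of $r$ linearly independent $f_i \in \mathrm{Hom}_F(A_1, F)$; setting $g_i = f_i \circ \phi_i^{-1}$ and assembling these contributions over all isomorphism types produces a linear map $c : \asoc(L) \to F$ which is non-zero on every abelian minimal ideal, the check being identical to the diagonal calculation of Section \ref{diags}. Extend $c$ to a generalised character $c : L \to F$; the singleton $\{c\}$ is a simple cluster, so Lemma \ref{exC} supplies an irreducible $L$-module $V_0$ with character $c$ on which every abelian minimal ideal acts non-trivially. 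Then run the tensor-product induction of Theorem \ref{main} verbatim against the non-abelian minimal ideals $K_1,\ldots,K_s$, using Lemma \ref{prod} at each step, to obtain an irreducible $V = V_s$ on which every minimal ideal of $L$ acts non-trivially; its kernel, being an ideal of $L$ containing no minimal ideal, is zero.

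The principal obstacle is conceptual rather than technical: one has to identify what the ``infinite resource'' used in Theorem \ref{main} really was, namely the fact that $\mathrm{Hom}_F(A_1,\bar F)$ is an infinite-dimensional $F$-space whenever $\bar F/F$ is infinite, so arbitrarily many linearly independent $f_i$ can be chosen. Over an algebraically closed field this space collapses to $\mathrm{Hom}_F(A_1, F)$ of dimension $\dim(A)$, and the theorem asserts that this finite cap on independent diagonals is precisely the obstruction to faithfulness. Once this dual-space count is isolated, neither direction requires anything beyond the tools already established.
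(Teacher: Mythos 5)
Your two directions are, at their core, the paper's own: necessity by showing that the forms $c\circ\phi_i$ on a representative $A_1$ must be linearly independent because the character of a faithful irreducible module cannot vanish on any diagonal, and sufficiency by rerunning Lemma \ref{abelian} with $F$-valued forms (possible exactly when $r\le\dim A_1$) and then the tensor induction of Theorem \ref{main}. The gap is that you never carry out the reduction that makes these steps legitimate. The pivot of both of your directions is the equivalence ``an abelian minimal ideal acts non-trivially on an irreducible module with character $c$ if and only if $c$ is non-zero on it'', and in Section \ref{diags} this is asserted only \emph{under our assumptions}, namely that $L$ is restricted and that the $p$-operation vanishes on $\asoc(L)$. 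Without arranging this, the step ``by faithfulness $c(A_\lambda)\ne 0$'' is simply false: for the one-dimensional restricted algebra with $a^{[p]}=a$, the module on which $a$ acts as the identity is faithful and irreducible, yet $\rho(a)^p-\rho(a^{[p]})=0$ forces $c(a)=0$. Likewise, for a non-restricted $L$ the words ``character'', ``cluster'' and ``$\redu(L,C)$'' used in your sufficiency argument are only defined after passing to a $p$-envelope, so the appeal to Lemma \ref{exC} is not yet available.

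This is exactly what the first paragraph of the paper's proof supplies, and it is the genuinely new ingredient compared with Theorem \ref{main}: one passes to a minimal $p$-envelope $L^e$ with $[p]$ vanishing on the abelian socle, and checks that the multiplicity hypothesis is inherited by $L^e$ — every abelian minimal ideal $A$ of $L^e$ must lie in $L$, since otherwise $A\cap L=0$ and $L^e/A$ would be a $p$-envelope of $L$ of smaller dimension, contradicting minimality; one also needs that existence of a faithful irreducible module transfers appropriately between $L$ and $L^e$. Your proposal would be complete if you added this reduction (or restricted the claim to restricted $L$ with $[p]|_{\asoc(L)}=0$); as written, the passage from ``$A_\lambda$ acts non-trivially'' to ``$c(A_\lambda)\ne 0$'', and the use of the character and cluster machinery for an arbitrary $L$, are unsupported.
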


\begin{proof} 
We show first that if $L$ satisfies the condition, then so does a minimal $p$-envelope $(L^e,\p)$ of $L$.  We may suppose that $\p$ vanishes on $\asoc(L)$.  Suppose that $A$ is an abelian minimal ideal of $L^e$.  Then $A$ is a $\p$-ideal and $L^e/A$ is again a restricted Lie algebra.  If $A \not\subseteq L$, then $L^e/A$ is a $p$-envelope of $L$ of smaller dimension.  Thus every minimal ideal of $L^e$ is contained in $L$ and again, without loss of generality, we may suppose that $L$ is restricted with $\p$ vanishing on $\asoc(L)$.

We have already shown that the condition is sufficient for the existence of a faithful irreducible $L$-module.  So suppose that $V$ is a faithful irreducible $L$-module.  By \cite[Theorem 5.2.5]{SF}, $V$ has a character $c$.  Let $A_1, \dots , A_r$ be abelian minimal ideals chosen as above, with $L$-module isomorphisms $\phi_i: A_1 \to A_i$.  We have the linear maps $c_i = c \circ \phi_i: A_1 \to F$.  For the diagonal $A_\lambda$, $c|A_\lambda = \sum \lambda_i c_i \circ \phi^{-1}$.  As $V$ is faithful, $c$ is non-zero on every $A_\lambda$, so the $c_i$ are linearly independent.  Therefore $r \le \dim(A_1)$.
\end{proof}

Examples of algebras not satisfying the condition are easily found.  The $2$-dimensional abelian algebra is one such.

\begin{cor} Let $L$ be a finite-dimensional Lie algebra over an algebraically closed field of characteristic $p \ne 0$.  Then $L$ is a quotient of an algebra which has a faithful irreducible module.
\end{cor}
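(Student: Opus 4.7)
The plan is to construct an algebra $\tilde L$ as a split extension $L \ltimes V$ with $V$ an abelian ideal chosen so that Theorem~\ref{aclosed} applies to $\tilde L$. First I would invoke Jacobson's theorem to obtain a finite-dimensional faithful completely reducible $L$-module $W$, and let $W_1,\dots,W_k$ be a complete list of pairwise non-isomorphic irreducible summands of $W$. Set $V = W_1 \oplus \cdots \oplus W_k$. Since the kernel of the action of $L$ on $V$ is the intersection of the kernels on the $W_i$, which equals the kernel on $W$ and is zero, $V$ is still faithful. Form $\tilde L = L \ltimes V$ with $V$ an abelian ideal carrying the given $L$-action. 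Then $V$ is an ideal of $\tilde L$ and $\tilde L / V \cong L$, exhibiting $L$ as a quotient of $\tilde L$.

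The heart of the argument is the claim that every abelian minimal ideal of $\tilde L$ lies in $V$. Let $A$ be such an ideal. If $A \not\subseteq V$, then by minimality $A \cap V = 0$. Because $A$ and $V$ are both ideals of $\tilde L$, $[A,V] \subseteq A \cap V = 0$, so the non-zero image of $A$ under the projection $\tilde L \to L$ is an ideal of $L$ acting trivially on $V$, contradicting the faithfulness of $V$. Hence every abelian minimal ideal of $\tilde L$ is contained in $V$.

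Since $V$ acts trivially on itself, the irreducible $\tilde L$-submodules of $V$ coincide with the irreducible $L$-submodules. The decomposition $V = W_1 \oplus \cdots \oplus W_k$ into pairwise non-isomorphic irreducibles forces these submodules to be exactly $W_1,\dots,W_k$, with no diagonals, since distinct isotypic components admit none. Therefore each abelian minimal ideal of $\tilde L$ has multiplicity $1 \le \dim W_i$ in $\asoc(\tilde L)$, and Theorem~\ref{aclosed} supplies a faithful irreducible $\tilde L$-module. The only genuinely delicate point is excluding abelian minimal ideals of $\tilde L$ whose projection to $L$ is non-trivial, which is precisely what the $[A,V] = 0$ argument above handles.
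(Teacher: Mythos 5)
Your proposal is correct and follows essentially the same route as the paper: form the split extension of a faithful completely reducible module, trimmed so that each irreducible type occurs once (the paper's ``no redundant summands''), and apply Theorem~\ref{aclosed}. The details you supply --- that every abelian minimal ideal of $L \ltimes V$ lies in $V$ and that the multiplicities there are $1$ --- are exactly the verifications the paper leaves implicit.
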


\begin{proof} $L$ has a faithful completely reducible module $V$.  We may suppose that $V$ has no redundant summands.  The split extension of $V$ by $L$ satisfies the condition for the existence of a faithful irreducible module.
\end{proof}

\bibliographystyle{amsplain}

\end{document}